\newtheorem{theorem}{Theorem}
\theoremstyle{remark}
\theoremstyle{definition}
\let\oldH=\H
\newcommand{\R}{\mathbb{R}}
\newcommand{\E}{\mathbb{E}}
\newcommand{\ep}{\varepsilon}
\renewcommand{\le}{\leqslant}
\renewcommand{\ge}{\geqslant}
\newcommand{\Ll}{\left}
\newcommand{\Rr}{\right}
\newcommand{\1}{\mathbf{1}}
\newcommand{\msc}{\mathscr}
\newcommand{\be}{\beta}
\begin{document}

\author{Jean-Christophe Mourrat}
\address[Jean-Christophe Mourrat]{Department of Mathematics, ENS Lyon and CNRS, Lyon, France}

\keywords{}
\subjclass[2010]{}
\date{\today}

\title[Color symmetry breaking in the Potts spin glass]{Color symmetry breaking \\ in the Potts spin glass}

\begin{abstract}
The Potts spin glass is an analogue of the Sherrington-Kirkpatrick model in which each spin can take one of $\kappa$ possible values, which we interpret as colors. It was suggested in \cite{bates2024parisi} that the order parameter for this model is always invariant with respect to permutations of the colors. We show here that this is false whenever $\kappa \ge 58$. 
\end{abstract}

\maketitle

%
%
%
%
%
%

Let $\kappa \ge 2$ and $N\ge 1$ be integers, and let $(g_{ij})_{i,j \ge 1}$ be independent centered Gaussians of variance $1$. The energy function of the Potts spin glass is defined, for every $\sigma \in \{1,\ldots, \kappa\}^N$, by
\begin{equation}
\label{e.def.HN}
H_N(\sigma) := \frac 1 {\sqrt{N}} \sum_{i,j = 1}^N g_{ij} \1_{\{\sigma_i = \sigma_j\}}.
\end{equation}
The associated free energy at inverse temperature $\beta \ge 0$ is 
\begin{equation}  
\label{e.def.FN}
F_N(\beta) := \frac 1 N \E \log \sum_{\sigma \in \{1,\ldots, \kappa\}^N}\exp (\beta H_N(\sigma)).
\end{equation}
For $\kappa  = 2$, the Potts spin glass essentially coincides with the Sherrington-Kirkpatrick model \cite{sherrington1975solvable}. Indeed, for $\kappa = 2$ we may as well consider that $\sigma$ ranges in $\{-1,1\}^N$, and for every $\sigma \in \{-1,1\}^N$ and $i,j \in \{1,\ldots, N\}$, we can write
\begin{equation*}  
\1_{\{\sigma_i = \sigma_j\}} = \frac 1 2 (\sigma_i \sigma_j + 1),
\end{equation*}
so that 
\begin{multline}  
\label{e.identity.free.energies}
\frac 1 N\E \log \sum_{\sigma \in \{1,2\}^N} \exp(\beta H_N(\sigma)) 
\\
= \frac 1 N\E \log \sum_{\sigma \in \{-1,1\}^N} \exp \Ll(\frac{\beta}{2\sqrt{N}} \sum_{i,j = 1}^N g_{ij} \sigma_i \sigma_j\Rr).
\end{multline}
The right-hand side of the display above is the free energy of the Sherrington-Kirkpatrick model at inverse temperature $\beta/2$.

The asymptotic behavior of the free energy of the Potts spin glass has been obtained in \cite{pan.potts}. The analysis proceeds by first identifying the asymptotics of the free energy restricted to configurations with a prescribed proportion of each color. To be precise, let 
\begin{equation*}  
\msc D := \Ll\{(d_1,\ldots, d_\kappa) \in [0,1]^\kappa \ \mid \ \sum_{k = 1}^\kappa d_k = 1\Rr\},
\end{equation*}
and for each $d \in \msc D$ and $\ep > 0$, let
\begin{multline*}  
\Sigma_N(d,\ep) 
\\
:= \Ll\{ \sigma \in \{1,\ldots, \kappa\}^N \ \mid \  \text{for every } k \in \{1,\ldots, \kappa\}, \  \Ll| \frac 1 N\sum_{i =1}^N \1_{\{\sigma_i = k\}} - d_k  \Rr| \le \ep \Rr\},
\end{multline*}
\begin{equation*}  
F_N(\beta,d,\ep) := \frac 1 N \E \log \sum_{\sigma \in \Sigma_N(d,\ep)}\exp (\beta H_N(\sigma)) .
\end{equation*}
We also define, for every $d \in \msc D$,
\begin{multline}  
\label{e.def.pid}
\Pi_d := \Big\{ \pi : [0,1] \to S^\kappa_+ \ \mid \ \pi \text{ is left-continuous, non-decreasing, } 
\\
\pi(0) = 0, \text{ and } \pi(1) = \mathrm{diag}(d_1,\ldots, d_\kappa)  \Big\} ,
\end{multline}
where $S^\kappa_+$ denotes the set of $\kappa$-by-$\kappa$ positive semidefinite matrices. In \eqref{e.def.pid}, we say that the path $\pi$ is non-decreasing to mean that for every $u\le v \in [0,1]$, we have $\pi(v) - \pi(u) \in S^\kappa_+$.  
\begin{theorem}[\cite{pan.potts}]
\label{t.limit.free}
There exists an explicit functional $\msc P_\beta : \bigcup_{d \in \msc D} \Pi_d \to \R$ such that for every $d \in \msc D$,
\begin{equation*}  
\lim_{\ep \to 0} \limsup_{N \to \infty} F_N(\beta,d,\ep) = \lim_{\ep \to 0} \liminf_{N \to \infty} F_N(\beta,d,\ep) = \inf_{\pi \in \Pi_d} \msc P_\beta(\pi).
\end{equation*}
As a consequence,
\begin{equation}  
\label{e.lim.FN}
\lim_{N \to +\infty} F_N(\beta) = \sup_{d \in \msc D} \inf_{\pi \in \Pi_d} \msc P_\beta(\pi). 
\end{equation}
\end{theorem}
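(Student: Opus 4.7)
The plan is to establish matching upper and lower bounds for the color-constrained free energy $F_N(\beta,d,\ep)$; the formula for $F_N(\beta)$ then follows by discretizing $\msc D$, noting that $F_N(\beta)$ equals, up to negligible error, the supremum of $F_N(\beta,d,\ep)$ over a polynomially-fine net in $\msc D$, together with continuity of $d \mapsto \inf_{\pi \in \Pi_d} \msc P_\beta(\pi)$.

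I would first construct $\msc P_\beta$ in Parisi-Ruelle form. Given $\pi \in \Pi_d$, approximate it by a finitely-valued path $0 = Q_0 \le Q_1 \le \cdots \le Q_n = \mathrm{diag}(d_1,\ldots,d_\kappa)$ in $S^\kappa_+$ (ordered in the PSD sense) at heights $0 = x_0 \le \cdots \le x_n = 1$, and build a hierarchical centered Gaussian field $(Y(\alpha))$ indexed by the leaves of a depth-$n$ tree whose covariance between two leaves sharing ancestry up to depth $r$ is a fixed $\kappa \times \kappa$ matrix determined by $Q_r$. Weight the leaves by a Ruelle probability cascade with parameters $x_r$. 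Then $\msc P_\beta(\pi)$ is the corresponding Parisi functional: a recursively defined log-exponential term minus a convex Lagrange correction that is quadratic in $\pi$.

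For the upper bound I would apply a Guerra-type interpolation between $\beta H_N$ at $t=0$ and an independent copy of the hierarchical field above at $t=1$. Gaussian integration by parts yields a derivative in $t$ which, after telescoping over the tree levels, equals a non-positive expression in the two-replica Potts overlap matrix $R^{1,2}_{ab} := \frac{1}{N}\sum_i \1_{\{\sigma^1_i = a\}} \1_{\{\sigma^2_i = b\}}$; the essential structural input is that $R^{1,2} \in S^\kappa_+$, which validates the matrix positivity comparison. Passing to the limit yields $\limsup_N F_N(\beta,d,\ep) \le \msc P_\beta(\pi)$, and infimizing over $\pi$ gives one direction.

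For the lower bound I would combine the Aizenman-Sims-Starr cavity expansion with Panchenko's structural theorems. Perturb $H_N$ by a small random polynomial in the entries of $R^{1,2}$ to enforce the extended Ghirlanda-Guerra identities, then derive asymptotic ultrametricity of the matrix overlap array. The decisive step is \emph{synchronization}: show that every entry $R^{1,2}_{ab}$ can be written as a non-decreasing deterministic function of a single scalar ancestry parameter, so that the asymptotic overlap distribution is encoded by a path in $\Pi_d$ and the cavity formula matches $\msc P_\beta(\pi)$. The main obstacle is precisely this synchronization step: unlike in the scalar SK case, the overlap is a $\kappa \times \kappa$ PSD matrix, and one must rule out the possibility that distinct color pairs generate independently ordered hierarchies. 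This is accomplished by combining PSD positivity of the overlap with the Ghirlanda-Guerra identities applied to multivariate monomials in the overlap entries, forcing comonotone coupling across all $(a,b)$; implementing this rigorously is the technical heart of the proof.
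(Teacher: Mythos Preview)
The paper does not prove this theorem at all: it is quoted verbatim from \cite{pan.potts} and no proof is given here, only the remark that the explicit description of $\msc P_\beta$ (and in particular the upper bound obtained by setting $\lambda = 0$ on a replica-symmetric path) will be used later. So there is nothing in the paper to compare your proposal against.

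That said, your outline is a faithful high-level summary of Panchenko's actual argument in \cite{pan.potts}: Guerra interpolation with Ruelle cascades for the upper bound (using that the Potts overlap matrix $R^{1,2}$ lies in $S^\kappa_+$), and the Aizenman--Sims--Starr cavity representation together with perturbation, extended Ghirlanda--Guerra identities, ultrametricity, and the synchronization mechanism for the lower bound. One minor correction: in your construction of $\msc P_\beta$ you should also carry the Lagrange multiplier $\lambda$ enforcing the color-proportion constraint $d$ (the paper explicitly mentions this parameter and sets $\lambda = 0$ only to extract the crude upper bound \eqref{e.main2}); without it the cavity lower bound does not match the functional on $\Pi_d$.
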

The reference \cite{pan.potts} provides us with an explicit description of the functional~$\msc P_\beta$ as an infimum over an additional parameter denoted by $\lambda$ there. For the purposes of this note, we will only need an upper bound on $\msc P_\beta$ which is obtained by selecting $\lambda = 0$, and we will only need this upper bound on very simple (replica-symmetric) paths.  

The definition of the Potts spin glass is clearly invariant under permutations of the $\kappa$ different values that a spin can take. We interpret these different values as colors. We say that the Potts spin glass (at inverse temperature $\beta$) \emph{preserves the color symmetry} if the supremum over $d \in \msc D$ in \eqref{e.lim.FN} is achieved at $d = (\frac 1 \kappa, \cdots, \frac 1 \kappa)$, and if moreover, the infimum over $\pi \in \Pi_{(\frac 1 \kappa, \cdots, \frac 1 \kappa)}$ in \eqref{e.lim.FN} is achieved at a path~$\pi$ that is color-symmetric (in other words, for each $u \in [0,1]$, the diagonal entries of $\pi(u)$ are all the same, and the non-diagonal entries of $\pi(u)$ are all the same). Otherwise, we say that the Potts spin glass \emph{breaks the color symmetry}. The main result of \cite{bates2024parisi} is that the infimum over $\pi \in \Pi_{(\frac 1 \kappa, \cdots, \frac 1 \kappa)}$ in \eqref{e.lim.FN} is always achieved at a path~$\pi$ that is color-symmetric. In particular, we could equivalently say that the Potts spin glass (at inverse temperature $\beta$) preserves the color symmetry if the supremum over $d \in \msc D$ in \eqref{e.lim.FN} is achieved at $d = (\frac 1 \kappa, \cdots, \frac 1 \kappa)$. It was suggested in \cite{bates2024parisi} that the Potts spin glass always preserves the color symmetry. We postpone a more precise discussion of the literature and first show that this conjecture is false when~$\kappa$ is sufficiently large. 

\begin{theorem}[Color symmetry breaking]
\label{t.main}
For every $N \ge 1$ and $\beta \ge 0$, we have
\begin{equation}  
\label{e.main1}
F_N(\beta) \ge \Ll( \frac{N-1}{N} \Rr)^{3/2} \frac{2\beta}{3 \sqrt{\pi}},
\end{equation}
and
\begin{equation}  
\label{e.main2}
\inf_{\pi \in \Pi_{(\frac 1 \kappa, \ldots, \frac 1 \kappa)}} \msc P_\beta(\pi) \le \log \kappa +  \frac{\beta^2}{2\kappa} .
\end{equation}
In particular, the claim that $F_N(\beta)$ converges to the left-hand side of \eqref{e.main2} is false as soon as 
\begin{equation}  
\label{e.crit}
\frac{\kappa}{\log \kappa} > \frac{9\pi}{2} \quad \text{ and } \quad \Ll|\frac{3\sqrt{\pi}}{2\kappa} \beta -  1\Rr|^2 < 1- \frac{9\pi \log \kappa}{2\kappa}. 
\end{equation}
\end{theorem}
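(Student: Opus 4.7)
My plan is to prove the three conclusions in order and independently: a greedy ground-state estimate for~\eqref{e.main1}, an annealed computation for~\eqref{e.main2}, and elementary algebra for the criterion~\eqref{e.crit}.

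For~\eqref{e.main1}, the first step is to discard from the partition function all configurations using more than two colors. By the identity~\eqref{e.identity.free.energies}, the $\tfrac{1}{N}\E\log$ of the restricted sum equals the SK free energy at inverse temperature~$\beta/2$, and the estimate $\log\sum_\sigma e^{\alpha X_\sigma}\ge\alpha\max_\sigma X_\sigma$ then gives
\begin{equation*}
F_N(\beta)\ge \tfrac{\beta}{2N}\,\E\max_{\sigma\in\{\pm 1\}^N}\frac{1}{\sqrt N}\sum_{i,j=1}^N g_{ij}\sigma_i\sigma_j.
\end{equation*}
The antisymmetric part of $(g_{ij})$ cancels against the symmetric quantity $\sigma_i\sigma_j$, so with $h_{ij}:=(g_{ij}+g_{ji})/\sqrt 2$ (i.i.d.\ standard Gaussian for $i<j$) the task reduces to lower-bounding $\E\max_\sigma\sum_{i<j}h_{ij}\sigma_i\sigma_j$. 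I would apply the greedy rule $\sigma_1:=1$ and $\sigma_k:=\sign\bigl(\sum_{j<k}h_{jk}\sigma_j\bigr)$ for $k\ge 2$: revealing the rows of~$h$ in this order keeps $(h_{jk})_{j<k}$ independent of the previously chosen spins, so each summand $|\sum_{j<k}h_{jk}\sigma_j|$ has expectation $\sqrt{2(k-1)/\pi}$, and the elementary bound $\sum_{k=1}^{N-1}\sqrt k\ge \tfrac{2}{3}(N-1)^{3/2}$ produces the constant $\tfrac{2}{3\sqrt\pi}$ in~\eqref{e.main1} after unfolding the remaining normalizations.

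For~\eqref{e.main2}, I would follow the hint in the excerpt: take $\lambda=0$ in Panchenko's representation of $\mathscr{P}_\beta$ and evaluate on the trivial replica-symmetric path $\pi\in\Pi_{(1/\kappa,\ldots,1/\kappa)}$ that vanishes on $[0,1)$ and equals $\mathrm{diag}(1/\kappa,\ldots,1/\kappa)$ at $u=1$. This produces the annealed bound, which one can independently verify using Theorem~\ref{t.limit.free} together with Jensen's inequality: indeed
\begin{equation*}
\inf_{\pi}\mathscr{P}_\beta(\pi)\le \limsup_{\epsilon\to 0}\limsup_{N\to\infty}\tfrac{1}{N}\log\E\!\!\sum_{\sigma\in\Sigma_N((1/\kappa,\ldots),\epsilon)}\!\!\exp(\beta H_N(\sigma)),
\end{equation*}
and the right-hand side equals $\log\kappa+\beta^2/(2\kappa)$ by direct computation: $\mathrm{Var}\,H_N(\sigma)/N\to 1/\kappa$ uniformly on balanced configurations as $\epsilon\to 0$, while the multinomial coefficient contributes $\log\kappa$ per site in the limit.

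Combining the two, if $F_N(\beta)$ converged to the right-hand side of~\eqref{e.main2} then~\eqref{e.main1} would force $\tfrac{2\beta}{3\sqrt\pi}\le \log\kappa+\tfrac{\beta^2}{2\kappa}$, equivalently $\beta^2-\tfrac{4\kappa\beta}{3\sqrt\pi}+2\kappa\log\kappa\ge 0$. This quadratic in $\beta$ has positive discriminant exactly when $\kappa/\log\kappa>9\pi/2$, and its strict-negativity region rewrites as $\bigl|\tfrac{3\sqrt\pi\beta}{2\kappa}-1\bigr|^2<1-\tfrac{9\pi\log\kappa}{2\kappa}$, which is precisely~\eqref{e.crit}. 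The main obstacle is the greedy step: one has to notice that this elementary, linear-in-$\beta$ lower bound on the SK ground state is already strong enough to beat the quadratic-in-$\beta$ annealed bound on balanced configurations, with the constants $\tfrac{2}{3\sqrt\pi}$ and $\tfrac{1}{2\kappa}$ lining up to give the clean criterion~\eqref{e.crit}.
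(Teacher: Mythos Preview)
Your proposal is correct and follows essentially the same route as the paper: restrict to two colors and invoke the SK ground-state lower bound for~\eqref{e.main1}, plug the trivial path with $\lambda=0$ into Panchenko's functional for~\eqref{e.main2}, and do the quadratic algebra for~\eqref{e.crit}. Your greedy construction is exactly the content of the exercises the paper cites for~\eqref{e.main1}, and your alternative Jensen/annealed verification of~\eqref{e.main2} is a pleasant bonus that sidesteps the explicit form of~$\mathscr P_\beta$.
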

\begin{proof}
We start with the proof of \eqref{e.main1}. By restricting the summation on $\sigma \in \{1,\ldots, \kappa\}^N$ to a summation over $\sigma \in \{1,2\}^N$ in the definition of $F_N(\beta)$ in~\eqref{e.def.FN}, we see that the term on the left side of \eqref{e.identity.free.energies} is a lower bound for $F_N(\beta)$. A simple lower bound for the term on the right side of \eqref{e.identity.free.energies} can be found in \cite[Exercises~6.1 and 6.3 and solutions]{HJbook}, and this yields \eqref{e.main1}. 

As announced, for the proof of \eqref{e.main2} we only need to consider a very special path, and we fix the additional parameter $\lambda$ appearing in~\cite{pan.potts} to be zero. We choose the path $\pi$ to be constant equal to $0$
 over $[0,1)$. In the notation of \cite{pan.potts}, this corresponds to the case of $r = 1$, $x_0 = 1$, $\gamma_0 = 0$, $\gamma_1 = \mathrm{diag}(\frac 1 \kappa, \ldots, \frac 1 \kappa)$.
Letting $(z_1, \ldots, z_\kappa)$ be independent centered Gaussians of variance $1$, we find that
\begin{equation*}  
\msc P_\beta(\pi) \le  \log \E\sum_{k = 1}^\kappa\exp \Ll( \beta\sqrt\frac{2}{\kappa}  z_k \Rr) - \frac{\beta^2}{2\kappa} = \log \kappa + \frac{\be^2}{2\kappa},
\end{equation*}
which is \eqref{e.main2}. The last part of the theorem follows by identifying the region in which the right-hand side of \eqref{e.main1} exceeds the right-hand side of~\eqref{e.main2}. 
\end{proof}
The condition in \eqref{e.crit} is non-empty as soon as $\kappa \ge 58$. I made no attempt to obtain sharp bounds. In particular, the argument for~\eqref{e.main1} yields that the large-$N$ limit of~$F_N(\beta)$ is bounded from below by $\beta/2$ times the maximum of the Sherrington-Kirkpatrick energy function, which is expected to be about $\sqrt{2} \times 0.763\ldots$ \cite{crisanti2002analysis, el2020algorithmic, schmidt2008replica} (the extra $\sqrt{2}$ accounts for a difference in the choice of normalization). Using this bound instead, we find that color symmetry breaking occurs as soon as the number of colors $\kappa$ is at least $21$. 

By analogy with the non-disordered version of the Potts model, one may expect that, at least for sufficiently large values of $\kappa$, the range of $\beta$'s at which color symmetry is broken is unbounded. The bound \eqref{e.main2} is however too crude to allow us to obtain this. By reasoning as in \cite[Exercise~6.3 and solution]{HJbook}, one can see that for every $N \ge 1$, $d \in \msc D$ and $\ep > 0$, 
\begin{equation*}  
F_N(\beta,d,\ep) = \frac{\beta}{N} \E \max_{\sigma \in \Sigma_N(d,\ep)} H_N(\sigma) + O\Ll( 1 \Rr) \qquad (\beta \to +\infty),
\end{equation*}
so the bound in \eqref{e.main2} does not even capture the correct asymptotic behavior of this quantity as $\beta$ tends to infinity, as it incorrectly scales like $\beta^2$ instead of scaling like $\beta$. There is clearly a lot of room to improve upon \eqref{e.main2}. 

We now briefly review previous works on the topic. 
Most works in the physics literature also allow for the couplings $(g_{ij})$ to have a bias. In order to discuss this while keeping consistent notation, we thus define a more general version of the free energy by setting, for every $\beta \ge 0$ and $\gamma \in \R$, 
\begin{equation}  
\label{e.def.FN2}
F_N(\beta, \gamma) := \frac 1 N \E \log \sum_{\sigma \in \{1,\ldots, \kappa\}^N}\exp \Ll(\beta H_N(\sigma) + \frac{\beta \gamma}{N} \sum_{i,j = 1}^N \1_{\{\sigma_i = \sigma_j\}}\Rr).
\end{equation}
In the papers \cite{elderfield1983curious, elderfield1983novel}, the authors restrict their analysis to the case $\gamma \le \gamma_F(\kappa)$, where $\gamma_F(\kappa)$ is ``the critical mean exchange for the
highest-temperature transition to be to a ferromagnetic state''. 
In \cite{elderfield1983spin}, they announce a full resolution of the phase diagram in $(\beta, \gamma)$ (also allowing for an external field); I cannot extract from there a precise condition on $(\beta,\gamma)$ that would guarantee color symmetry, but they say that this requires to take $\gamma < 0$ for $\kappa > 4$, citing \cite{lage1983mixed}. A concurrent work is \cite{erzan1983infinite}, which focuses on the case $\gamma = 0$; the authors postulate color symmetry there, but they quickly correct this in~\cite{lage1983mixed}  and propose a more sophisticated solution that is not color-symmetric for $\kappa > 4$. 
In \cite{gross1985mean}, we  read that ``an appropriate nonzero value of $\gamma$ must be chosen \cite{elderfield1983curious, elderfield1983spin, elderfield1983novel}'' (notation and pointers adapted). In~\cite{desantis1995static}, the authors state that ``ferromagnetic order is always preferred for $\kappa> 2$ for sufficiently low temperature'', they cite \cite{elderfield1983spin}, and they give a formula for the transition temperature which they denote by $T_F$. They specify that the transition is to a ``colinear ferromagnet'', which in the language of \cite{elderfield1983spin} is a phase in which only one color displays a non-zero overlap (in particular, this phase is not color-symmetric). They then say that ``In the special case $\gamma = 0$ the ferromagnetic transition appears below $T = 1$ for $\kappa < 4$ and above that temperature for $\kappa > 4$. Our main interest in this
paper is the study of the spin-glass transition. In order not to observe the ferromagnetic transition it will be necessary to add an antiferromagnetic coupling in the case $\kappa > 4$.'' (notation for $\gamma$ and $\kappa$ adapted). For their numerical simulations, they chose $\gamma =  \frac{4-\kappa}{2\sqrt{2}}$ for $\kappa \ge 4$ and found it suitable to their stated needs. Similar statements can also be found in \cite{caltagirone2012dynamical}, although with a different formula for the transition temperature  $T_F$, the existence of which is attributed to \cite{gross1985mean} there. 

The recent paper \cite{bates2024parisi} suggests that color symmetry is preserved for $\gamma = 0$ and arbitrary values of $\kappa$ and $\beta$. As we have seen, this is invalid at least for $\kappa \ge 58$. The authors of \cite{bates2024parisi} attribute this color-symmetry prediction to~\cite{elderfield1983curious}. My own reading of the physics literature is different, as explained in the previous paragaph. 
The work \cite{bates2024parisi} inspired \cite{chen2023parisi}, in which color symmetry is shown for all $\beta$ with the choice of $\gamma = 
-\frac \beta 2$. The results of \cite{chen2023parisi} have been generalized in \cite{issa2024existence} to a broader class of models. One may also consult \cite{chen2024free, pan.vec} for results on the asymptotics of the free energy of more general spin-glass models, and \cite{costeniuc2005complete, ellis1992limit} for a thorough study of the non-disordered mean-field Potts model. We also note that \cite{sen2018optimization} establishes a connection between the maximum $\kappa$-cut of an Erd\oldH os-R\'enyi random graph with average degree $d$ and the maximal energy of the balanced Potts spin glass, in the regime of large $d$.

\small
\bibliographystyle{plain}
\bibliography{potts}

\end{document}